\newtheorem{theorem}{Theorem}
\newtheorem{proposition}{Proposition}
\newtheorem{lemma}{Lemma}
\newtheorem{remark}{Remark}
\title{\LARGE \bf
A Continuous Threshold Model of Cascade Dynamics
}
\author{Yaofeng Desmond Zhong and Naomi Ehrich Leonard
\thanks{This research was supported by ARO grant W911NF-18-1-0325 and ONR grants N00014-14-1-0635, N00014-19-1-2556. The authors are with the Department of Mechanical and Aerospace Engineering, Princeton University, Princeton, NJ 08544, USA. {\tt\small \{y.zhong,naomi\}@princeton.edu}}%
}
\begin{document}

\maketitle
\thispagestyle{empty}
\pagestyle{empty}

\begin{abstract}

We present a continuous threshold model (CTM) of cascade dynamics for a network of agents with real-valued activity levels that change continuously in time. The model generalizes the linear threshold model (LTM) from the literature, where an agent becomes active (adopts an innovation) if the fraction of its neighbors that are active is above a threshold. With the CTM we study the influence on cascades of heterogeneity in thresholds for a network comprised of a chain of three clusters of agents, each distinguished by a different threshold. The system is most sensitive to change as the dynamics pass through a bifurcation point: if the bifurcation is supercritical the response will be contained, while if the bifurcation is subcritical the response will be a cascade. We show that there is a subcritical bifurcation, thus a cascade, in response to an innovation if there is a large enough disparity between the thresholds of sufficiently large clusters on either end of the chain; otherwise the response will be contained. 
\end{abstract}

\section{Introduction}
\label{intro}

Cascade dynamics refer to the spread of an activity or innovation among a group of agents. They have been modelled as discrete-time, discrete-valued state dynamics in which an agent accepts or rejects an innovation at each time step after comparing the fraction of its neighbors who have accepted the innovation to a threshold between 0 and 1. 
This model is referred to as the linear threshold model (LTM). 

The LTM was first introduced in \cite{granovetter1978threshold,schelling1978micromotives}. Kempe et al.\ \cite{kempe2003maximizing} and Lim et al.\ \cite{lim2015simple} studied the LTM with uniformly drawn thresholds. Zhong et al.\ \cite{zhong2017linear} generalized the LTM to duplex networks where there exist two different types of interactions among the agents. All of these results leverage thresholds drawn from a uniform distribution. Acemoglu et al.\ \cite{acemoglu2011diffusion} studied cascade dynamics using the LTM with deterministic thresholds. In this case, the analysis becomes challenging. Yang et al. \cite{yang_minimizing_2017} studied the influence minimization problem for the deterministic LTM by formulating the problem as a linear integer programming problem. Fardad and Kearney \cite{fardad2017linear} studied the optimal seeding problem of cascade failure using a relaxation of the deterministic LTM and formulated the problem as a convex program. Pinheiro et al. \cite{2018arXiv181108525P} introduced nonlinearity to the fraction of neighbors and investigated cascades on the all-to-all network. However, the analysis of cascades remains challenging for more general network graphs, heterogeneous agents, and deterministic thresholds.

We propose a continuous threshold model (CTM) that introduces continuous-time, real-valued state dynamics with nonlinearity. The CTM is adapted from nonlinear consensus dynamics \cite{gray2018multiagent}, which exhibit very rapid transitions in system state when the system passes through a bifurcation point. Using the proposed model, we can thus prove conditions for sudden cascades using methods from nonlinear dynamics.

We show how the CTM generalizes the LTM. We then use the CTM to study the influence on cascades of heterogeneity in (deterministic) thresholds among the agents. We consider networks comprised of three clusters of agents, each cluster associated with a different threshold and thus a different level of responsiveness to the state of neighbors. A lower threshold implies a higher responsiveness. Let $\epsilon > 0$. Cluster 1 is the ``high response'' cluster where agents use a threshold $\mu = 1/2 - \epsilon$.  Cluster 2 is the ``low response'' cluster where agents use a threshold $\mu = 1/2 + \epsilon$.  Cluster 3 is the ``neutral response'' cluster where agents use a threshold $\mu = 1/2$.


For the networks considered, we show how the size $n$ of clusters 1 and 2 and the strength of the disparity $2\epsilon$ between their thresholds  determines whether or not there is a cascade in response to the introduction of an innovation. When the size and disparity are small the group exhibits a contained response, whereas when the size and disparity are sufficiently large the group exhibits a rapid increase in state, indicating a cascade.  This is an interesting and even surprising result.


In the analysis, the contained response corresponds to a supercritical pitchfork bifurcation in the dynamics and the cascade corresponds to a subcritical pitchfork bifurcation. 
Gray. et al. \cite{gray2018multiagent} observed the transition from supercritical to subcritical pitchfork in nonlinear consensus dynamics;
however, they did not prove conditions under which this transition exists. The transition is also exhibited in the replicator-mutator dynamics studied by Dey et al. \cite{dey2018feedback}. They showed the existence of the transition for the system with two strategies.
We derive rigorous conditions for existence of the transition in the CTM dynamics of a network of $N$ agents comprised of three clusters distinguished by their thresholds. 

Our contribution is twofold. First, we present a new model of cascade dynamics that generalizes the LTM. Second, we provide new results on how cascades are influenced by heterogeneity in thresholds of agents, i.e., how ready an agent is to change its behavior in response to its neighbors.  For a network of three clusters, we derive a necessary condition for there to be a cascade that depends on the sizes of the clusters with disparate thresholds and $N$.  We show that there is a corresponding critical value $\epsilon^* > 0$ such that we can expect a cascade when $\epsilon > \epsilon^*$ but not when $\epsilon < \epsilon^*$. 

Section \ref{CTM} describes the CTM dynamics 
and equivalence to the LTM. In Section \ref{three_clusters}, we specialize the dynamics to a family of network graphs with three clusters. In Section \ref{contidion}, we define a cascade for the CTM and prove conditions under which a cascade occurs. Section \ref{example} provides an example. 
\section{Continuous Threshold Model}
\label{CTM}
The proposed model generalizes the discrete linear threshold model (LTM), and it is inspired by the Hopfield network dynamics \cite{hopfield1982neural} and adapted from nonlinear consensus dynamics \cite{gray2018multiagent}. To describe complex contagion within a group of $N$ agents, let $x_i  \in \mathbb{R}$ be the state of agent $i$, representing the activity level of agent $i$. Agent $i$ is said to be active (inactive) if $x_i>0$ $(x_i<0)$. A greater absolute value of the state $|x_i|$ means that agent $i$ is more active (more inactive).

Interactions among agents, i.e., who can sense or communicate with whom, are encoded in graph $G=(V, E)$, where $V=\{1, \ldots, N\}$ is the set of $N$ agents and $E \subset V\times V$ is the edge set representing interactions. An edge $e_{ij} \in E$ implies that $j$ is a neighbor of $i$. We assume there are no self-loops, i.e., $e_{ii} \notin E$. The graph adjacency matrix $A \in \mathbb{R}^{N \times N}$ is a matrix with elements of $0$ and $1$, where $a_{ij} = 1$ if and only if $e_{ij} \in E$. The degree matrix $D \in \mathbb{R}^{N\times N}$ is a diagonal matrix with diagonal entries $d_i = \sum_{j=1}^N a_{ij}$.

The {\em continuous threshold model} (CTM) defines the change in activity level of each agent over time as a function of the agent's current state and the state of its neighbors:
\begin{equation}
    \label{dynamics}
    \dot  x_i = - d_i x_i + \sum_{j=1}^N a_{ij} uS(vx_j) + d_i (1-2\mu_i).
\end{equation}
$\mu_i$ can be interpreted as the threshold of agent $i$ and $u,v>0$ 
as control parameters. $S: \mathbb{R} \rightarrow [-1,1]$ is a smooth, odd sigmoidal function that satisfies the following conditions: $S'(x)>0, \forall x \in \mathbb{R}$; $S'(0)=1$; and $\mathrm{sgn}(S''(x)) = -\mathrm{sgn}(x)$, where $(\cdot)'$ denotes the derivative and $\mathrm{sgn}$ is the sign function. 
Sigmoids are ubiquitous in models of biological and physical systems; here they serve to saturate influence from neighbors.

The control parameter $u$ can be interpreted as the strength of the ``social sensitivity'' since a larger $u$ means a greater attention to social cues. The control parameter $v$ can be interpreted as the strength of the ``social effort'' since a larger $v$ means a stronger signal sent by neighbors. The CTM generalizes the LTM in the following way. Let $u=1$ and $v\rightarrow +\infty$, then the dynamics (\ref{dynamics}) become 
\begin{equation}
    \label{dynamics_LTM}
    \frac{1}{d_i}\dot x_i = 
    -  x_i + 2 \Big(\sum_{j=1}^N \frac{a_{ij}}{d_i} \frac{S(vx_j)+1}{2}  -\mu_i \Big).
\end{equation}
Since $v\rightarrow +\infty$, the sigmoidal function approaches the sign function, which maps a negative state to -1 and a positive state to +1. The fraction $(S(vx_j)+1)/2$ maps a negative state to 0 and a positive state to 1. So the summation gives the fraction of active neighbors. Thus, the difference between the summation and $\mu_i$ is the comparison of fraction of active neighbors of agent $i$ to the threshold of agent $i$. 

Now consider the equilibrium points of (\ref{dynamics_LTM}). If agent $i$'s fraction of active neighbors is greater than its threshold, the steady-state value of $x_i$ is positive and agent $i$ is active; otherwise, the steady-state value of $x_i$ is negative and agent $i$ is inactive. Let unseeded agents be defined by a negative initial state and seeded agents by a large positive initial state. 
Then it follows that the dynamics (\ref{dynamics_LTM}) behave like the LTM with deterministic thresholds $\mu_i$, $i = 1, \dots, N$ in the sense that the ordering of unseeded agents switching from inactive to active is the same and hence the steady states are the same. 

We study the CTM for $v=1$ and $u$ a feedback control that depends on the {\em slow} filtered average state $\bar x_s$.  We let 
$u = u_0 S(\kappa|\bar x_s|)$ and
$\dot{\bar{x}}_s = \kappa_s \left( \bar x - \bar x_s\right)$
with $\bar x = \sum_{i=1}^N x_i/N$ and $u_0,\kappa,\kappa_s >0$.

\section{Networks with Three Clusters}
\label{three_clusters}
Consider a class of networks with $N$ agents in three clusters as in Section~\ref{intro}:  every agent $i$ in the high response cluster 1 has $\mu_i = 1/2 - \epsilon$, every agent $j$ in the low response cluster 2 has $\mu_j = 1/2 + \epsilon$ and every agent $k$ in the neutral response cluster 3 has $\mu_k = 1/2$. 
Let there be $n$ agents in cluster 1, $n$ in cluster 2, and $N-2n$ in cluster 3.

Let all edges in the network be undirected, i.e., $a_{ij} = a_{ji}$. Within each cluster, the graph is all-to-all. Each agent in cluster 3 is connected to each agent in clusters 1 and 2, and there are no connections between agents in cluster 1 and agents in cluster 2. The network is motivated by environments in which a feature that influences threshold adoption is distributed and agents with biased thresholds  interact with agents with similarly biased or unbiased thresholds. One example is 
a population clustered by age bracket, where the youngest are most likely and the eldest least likely to purchase a new technology when their friends do. 
Another is a spatially distributed group in which agents at one end measure smoke and adopt a low threshold to flee a fire, and agents on the other end miss the smoke and adopt a high threshold. 
Fig.~\ref{graph_11nodes} shows  a network with $N=11$ and $n=4$. 
\begin{figure}[h]
    \centering
    \includegraphics[width=0.4\textwidth]{./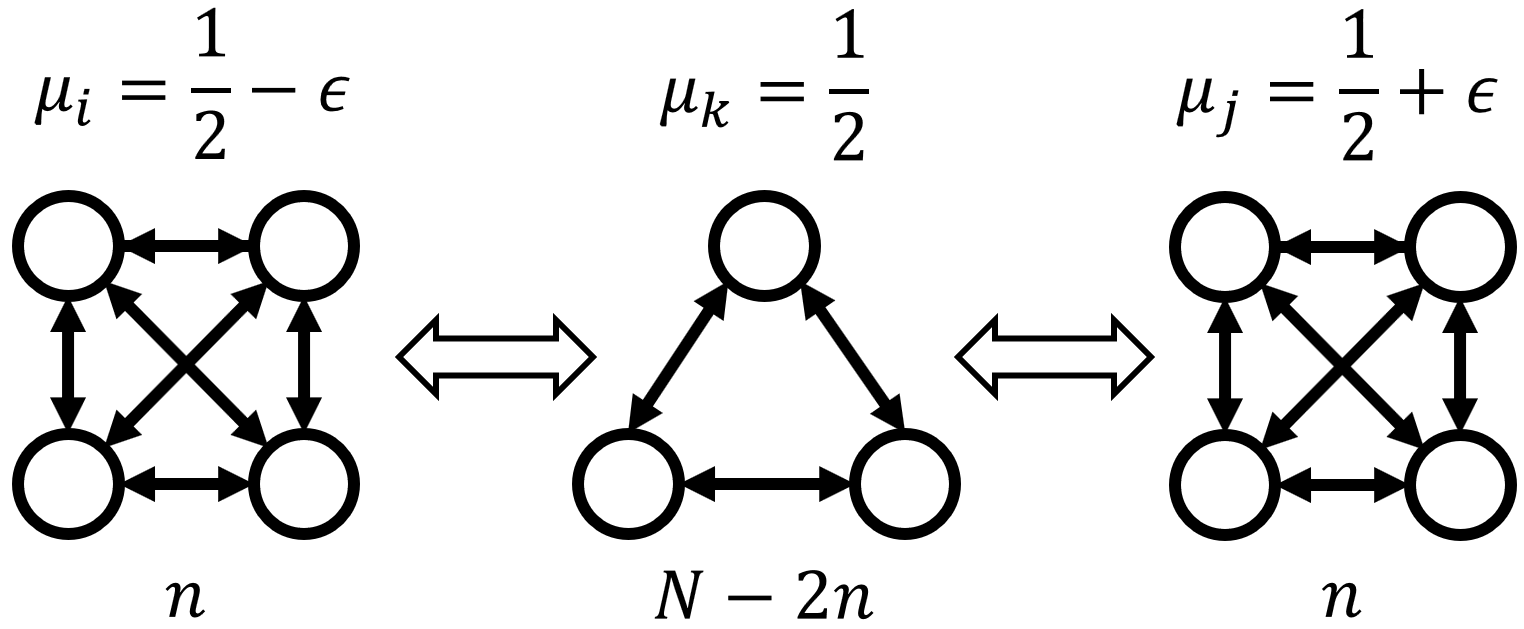}
    \caption{Network with three clusters: $N=11$ and $n=4$. Cluster 1 (high response) is on the left,  cluster 2 (low response) is on the right, and cluster 3 (neutral response) is in the middle. White arrows indicate all-to-all, undirected connections between nodes in clusters.}
        \label{graph_11nodes}
\end{figure}


With an approach similar to that in  Theorem~4 of \cite{gray2018multiagent}, it can be shown that the trajectories of  (\ref{dynamics}) converge exponentially to the three-dimensional manifold where all the states in the same cluster are the same. Let $y_k$ be the average state of cluster $k = 1, 2, 3$. The reduced dynamics are
\begin{align}
    \dot{y}_1 = &-(N-n-1)y_1 +(n-1)uS(y_1)  \nonumber\\
                &+ (N-2n)uS(y_3)+2(N-n-1)\epsilon \label{reduce_3_1}\\
    \dot{y}_2 = &-(N-n-1)y_2 +(n-1)uS(y_2) \nonumber\\
                &+ (N-2n)uS(y_3)-2(N-n-1)\epsilon \label{reduce_3_2}\\
    \dot{y}_3 = &-(N-1)y_3 + (N-2n-1)uS(y_3) \nonumber\\
                &+ nuS(y_1) + nuS(y_2) \label{reduce_3_3}.
\end{align}
Let $\mathbf{y} = [y_1,y_2,y_3]^T$ 
and $F(\mathbf{y},u,\epsilon)$ the RHS of (\ref{reduce_3_1})-(\ref{reduce_3_3}). 

$F$ commutes with the action of the nontrivial element of $Z_2$, the cyclic group of order 2, represented by the matrix: 
$\gamma = 
    \begin{bmatrix}
    0 & -1 & 0 \\
    -1 & 0 & 0 \\
    0 & 0 & -1
    \end{bmatrix}$,
i.e., $F(\gamma \mathbf{y},u,\epsilon)= \gamma F(\mathbf{y},u,\epsilon)$. 
This 
implies a $Z_2$-symmetric singularity. We show in the next section that $F$ 
possesses a pitchfork bifurcation, and we prove a necessary condition for 
the transition from a supercritical to a subcritical pitchfork.

The bifurcations are illustrated in Fig.~\ref{fig:symmetric} where  
the horizontal axis represents the bifurcation parameter $u$ and the vertical axis the average state $\bar{y} = (ny_1 + ny_2 + (N-2n)y_3)/N$. Blue curves represent stable solutions and red curves unstable solutions to (\ref{reduce_3_1})-(\ref{reduce_3_3}). The neutrally active average state $\bar{y} = 0$ is always a solution, and it is stable for $u < u^c$ and unstable for $u > u^c$, where $u=u^c$ is the bifurcation point. 


Due to the feedback, the social sensitivity parameter $u$ 
will slowly increase when an innovation has been introduced 
and cross the bifurcation point where the system is highly sensitive to change (see \cite{gray2018multiagent} for generalizations to heterogeneous $u_i$). For initial conditions corresponding to one or more active agents such that $\bar{y}(0)>0$, the solution will increase as shown by the green curves in Fig.~\ref{fig:symmetric}. 
The trajectory in the supercritical pitchfork slowly follows the positive branch of the pitchfork as $u$ is increased just above the critical value $u^c_{\mathrm{sup}}$. We define this slow increase in $\bar y$ as a {\em contained response}. The trajectory in the supercritical pitchfork jumps up to the positive branch as $u$ is increased just above the critical value $u^c_{\mathrm{sub}}$.  We define the jump as a {\em cascade} since it implies a rapid spread of the innovation.

\begin{figure}%
    \centering
    \subfloat{{\includegraphics[width=4.0cm]{./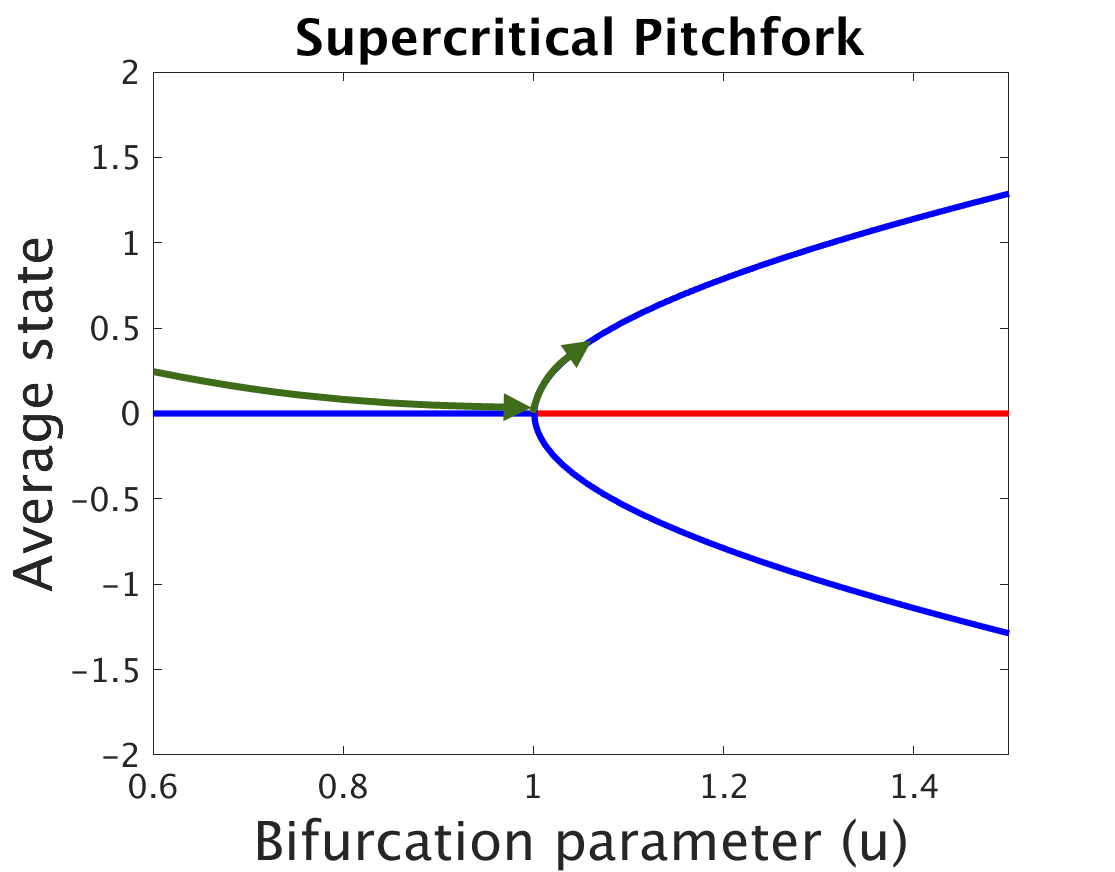} }}%
    \subfloat{{\includegraphics[width=4.4cm]{./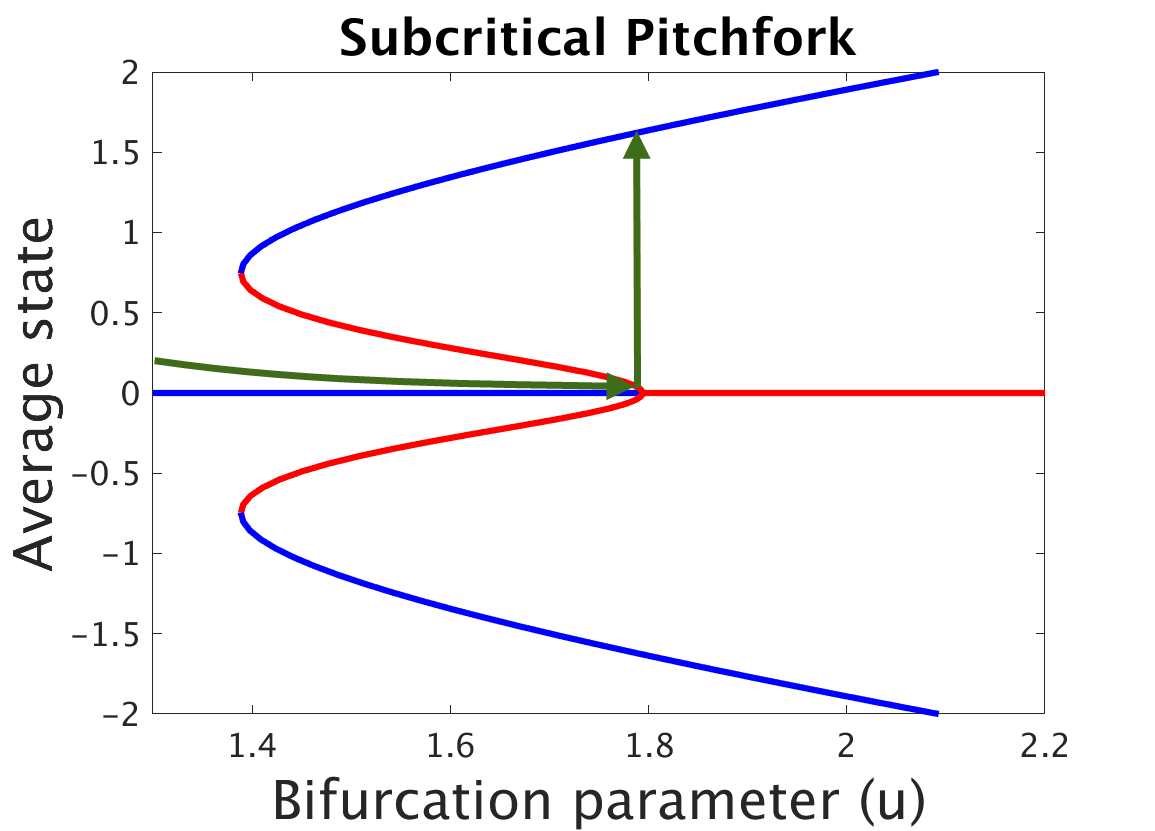} }}%
    \caption{Pitchfork bifurcation diagrams: supercritical (left) and subcritical (right). Blue (red) curves are stable (unstable) solutions. 
    Green curves are trajectories 
    as $u$ slowly increases.}%
    \label{fig:symmetric}%
\end{figure}

\begin{figure}%
    \centering
    \subfloat{{\includegraphics[width=4.2cm]{./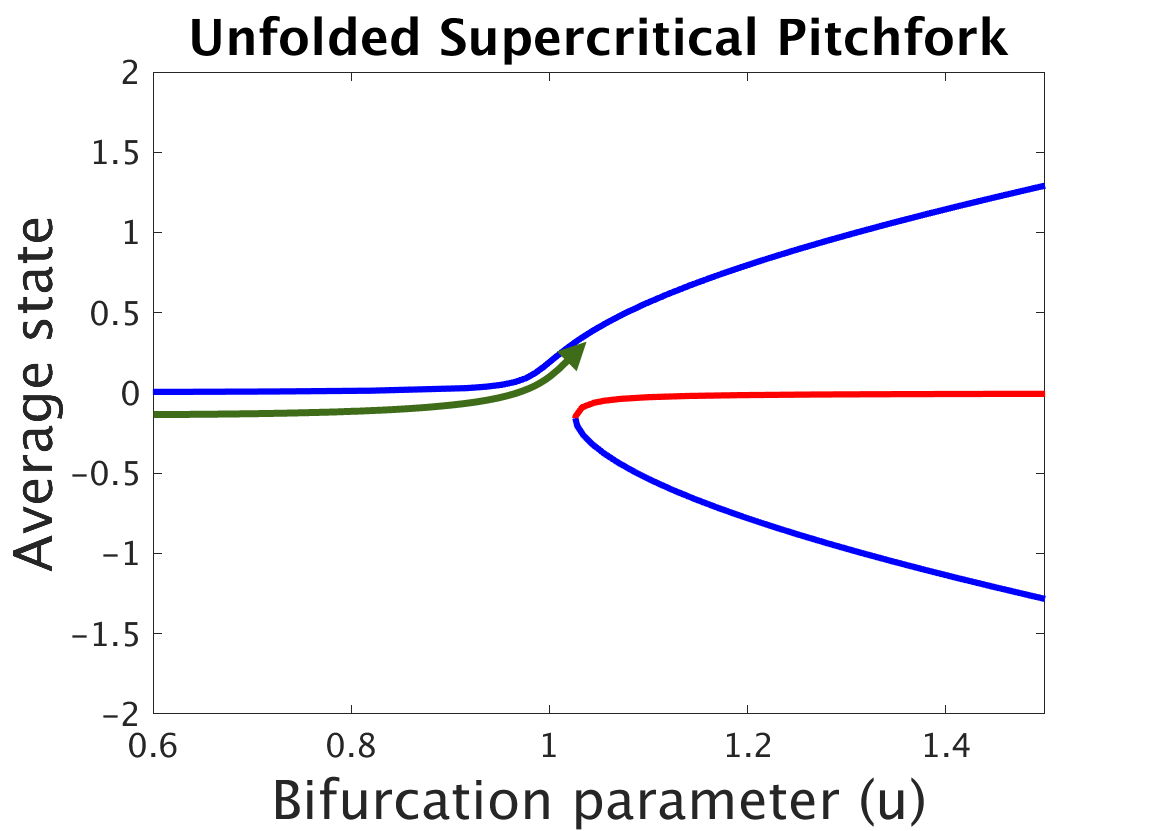} }}%
    \subfloat{{\includegraphics[width=4.2cm]{./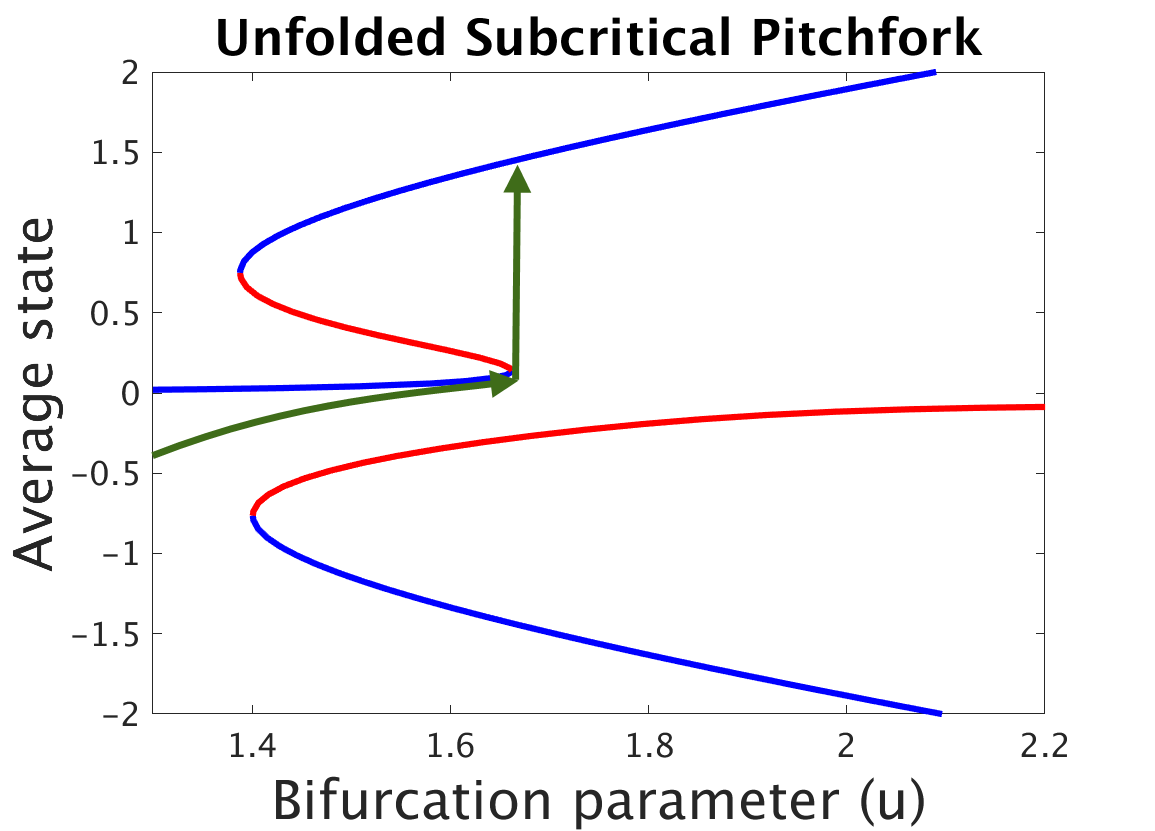} }}%
    \caption{Unfolded pitchfork bifurcation diagrams: supercritical (left) and subcritical (right). Colors are as in Fig.~\ref{fig:symmetric}.} 
    \label{fig:perturbed}%
\end{figure}

Fig.~\ref{fig:perturbed} shows what happens to the bifurcation diagrams in the presence of a small positive input to the dynamics (\ref{reduce_3_1}), corresponding to the introduction of an innovation as an external cue rather than as seeded positive initial conditions. The resulting ``unfolded'' supercritical pitchfork still exhibits the contained response and the ``unfolded'' subcritical pitchfork still exhibits the cascade.  
Here, even with an initial condition corresponding to an average initial state $\bar y(0)<0$, the innovation can still trigger a cascade. 
\section{Conditions for Cascade}
\label{contidion}
Although the transition from supercritical pitchfork to subcritical pitchfork has been observed in \cite{gray2018multiagent}, it is unclear for what parameter values 
the transition exists in the CTM. In this section, we first show conditions for existence of a pitchfork bifurcation and then for existence of the transition. 

By $Z_2$-symmetry,  $\mathbf{y}^{\star} = [y^{\star},-y^{\star},0]^T$ is always an equilibrium of (\ref{reduce_3_1})-(\ref{reduce_3_3}), where for a given $u$ and $\epsilon$, $y^{\star}$ satisfies
\begin{align}
    \label{y_star_eqn}
    -(N-n-1)y^{\star}+(n-1)uS(y^{\star})+2(N-n-1)\epsilon = 0.
\end{align}
Consider a perturbation to the trivial solution $\mathbf{y}^{\star}$ and denote the perturbed solution as $\mathbf{y}^{\star} + \Delta \mathbf{y}= [y^{\star} + \Delta y_1,-y^{\star} + \Delta y_2, \Delta y_3]^T$. We ask the question, could there be a nontrivial equilibrium point where $\Delta \mathbf{y} \neq \mathbf{0}$? The change from no nontrivial equilibria to the existence of nontrivial equilibria corresponds to the bifurcation point. The following perturbation analysis allows us to reduce the dynamics near the trivial equilibrium to one-dimensional dynamics that match the normal form of a pitchfork bifurcation. We can then evaluate if the pitchfork bifurcation is supercritical or subcritical by examining the sign of coefficients in the reduction.

We use the  Taylor series expansion to  third order: 
\begin{align}
    S(y^{\star}\!+\!\Delta y_1) =& S(y^{\star}) + S'(y^{\star}) \Delta y_1 + \frac{1}{2}S''(y^{\star}) (\Delta y_1)^2 \nonumber \\
        &  + \frac{1}{6}S'''(y^{\star}) (\Delta y_1)^3 + o((\Delta y_1)^3) \label{delta_1}\\
    S(-y^{\star}\!+\!\Delta y_2) \!=\! &S(-y^{\star}) \!+\! S'(-y^{\star}) \Delta y_2 \! + \! \frac{1}{2}S''(-y^{\star}) (\Delta y_2)^2 \nonumber \\
        &  + \frac{1}{6}S'''(-y^{\star}) (\Delta y_2)^3 + o((\Delta y_2)^3) \label{delta_2}  \\
    S(\Delta y_3) =& S'(0) \Delta y_3  +  \frac{1}{2}S''(0) (\Delta y_3)^2 \nonumber \\
    &+ \frac{1}{6}S'''(0) (\Delta y_3)^3  + o((\Delta y_3)^3)\label{delta_3}.
\end{align}
Since the sigmoidal is an odd function, we have that $S(y^{\star}) = -S(-y^{\star})$, 
 $S'(y^{\star}) =S'(-y^{\star})$,  $S''(y^{\star}) = - S''(-y^{\star})$ and $S'''(y^{\star}) =  S'''(-y^{\star})$. Without loss of generality, we use the hyperbolic tangent as the sigmoidal function from now on. For other types of sigmoidal functions, the analysis follows similarly. Now Eqn. (\ref{delta_3}) has the form of 
\begin{align}
    \mathrm{tanh}(\Delta y_3) = \Delta y_3 -\frac{1}{3} \Delta y_3^3  + o((\Delta y_3)^3).
    \label{tanh_delta_3}
\end{align}

Lemma~\ref{lemma_cubic} and Proposition~\ref{prop_reduce_1} will be used to derive the  conditions for transition from supercritical to subcritical bifurcation. 

\begin{lemma}
    \label{lemma_cubic}
    Assume $\Delta x\in \mathbb{R}$ and $\Delta y \in \mathbb{R}$ have small magnitudes and satisfy 
    \begin{equation}
         a (\Delta x)^3 + b (\Delta x)^2 + c \Delta x = - \Delta y + \frac{1}{3} (\Delta y^3)  + o((\Delta y)^3). \label{cubic}
    \end{equation}
    Then we have 
    \begin{equation*}
        \Delta x \!=\! -\frac{1}{c} \Delta y \!-\! \frac{b}{c^3} (\Delta y)^2\!+\! \Big(\frac{1}{3c}\!-\!\frac{2b^2}{c^5}\!+\!\frac{a}{c^4}\Big) (\Delta y)^3 \!+\! o((\Delta y^3)).
    \end{equation*}
\end{lemma}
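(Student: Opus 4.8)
The plan is to treat (\ref{cubic}) as an instance of local power-series inversion, justified by the implicit function theorem. First I would observe that the statement tacitly requires $c \neq 0$ (the claimed expansion carries powers of $c$ in the denominators), so the polynomial map $g(\Delta x) = a(\Delta x)^3 + b(\Delta x)^2 + c\,\Delta x$ satisfies $g(0)=0$ and $g'(0)=c\neq 0$. Hence $g$ is a local diffeomorphism near the origin, and (\ref{cubic}) determines $\Delta x$ as a smooth (in fact analytic) function of $\Delta y$ with $\Delta x\to 0$ as $\Delta y\to 0$; in particular $\Delta x = O(\Delta y)$, which is the crucial a priori bound that lets the higher-order terms be controlled.

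Next I would posit the third-order ansatz $\Delta x = \alpha\,\Delta y + \beta\,(\Delta y)^2 + \gamma\,(\Delta y)^3 + o((\Delta y)^3)$, substitute it into the left-hand side of (\ref{cubic}), and collect by order. Using $\Delta x = O(\Delta y)$ one gets $c\,\Delta x = c\alpha\,\Delta y + c\beta(\Delta y)^2 + c\gamma(\Delta y)^3 + o((\Delta y)^3)$, $b(\Delta x)^2 = b\alpha^2(\Delta y)^2 + 2b\alpha\beta(\Delta y)^3 + o((\Delta y)^3)$, and $a(\Delta x)^3 = a\alpha^3(\Delta y)^3 + o((\Delta y)^3)$. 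Matching against $-\Delta y + \frac13(\Delta y)^3 + o((\Delta y)^3)$ yields the triangular system
\begin{align*}
    c\alpha &= -1,\\
    c\beta + b\alpha^2 &= 0,\\
    c\gamma + 2b\alpha\beta + a\alpha^3 &= \frac{1}{3},
\end{align*}
which solves successively to $\alpha = -\frac{1}{c}$, $\beta = -\frac{b}{c^3}$, and $\gamma = \frac{1}{3c} - \frac{2b^2}{c^5} + \frac{a}{c^4}$ — exactly the coefficients claimed.

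Finally I would tidy up the little-$o$ bookkeeping, which is really the only subtlety. Smoothness of $\Delta x(\Delta y)$ from the implicit function theorem guarantees it does admit a degree-three Taylor expansion with an $o((\Delta y)^3)$ remainder; substituting that expansion back into (\ref{cubic}) and invoking uniqueness of asymptotic expansions (valid because $c\neq 0$, so the order-by-order system above has a unique solution) forces its coefficients to be the $\alpha,\beta,\gamma$ just computed. The main point to verify carefully is that every error term — those produced by feeding an $O(\Delta y)$ quantity into the quadratic and cubic monomials, together with the $o((\Delta y)^3)$ already present on the right-hand side of (\ref{cubic}) — is genuinely $o((\Delta y)^3)$ and therefore cannot perturb $\alpha,\beta,\gamma$. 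Once that is checked, the lemma follows by reading off the coefficients. (Equivalently, one could quote Lagrange inversion, but the direct coefficient-matching argument is self-contained and sufficient here.)
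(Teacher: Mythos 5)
Your proof is correct and follows essentially the same route as the paper's: posit the third-order ansatz for $\Delta x$ in powers of $\Delta y$, substitute into (\ref{cubic}), and match coefficients order by order, arriving at the same triangular system and the same values $\alpha=-1/c$, $\beta=-b/c^3$, $\gamma=1/(3c)-2b^2/c^5+a/c^4$. The only difference is that you additionally justify the validity of the ansatz (via the implicit function theorem and the requirement $c\neq 0$), a point the paper leaves implicit.
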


\begin{proof}
    Assume $\Delta x = \alpha_1 \Delta y +\alpha_2 (\Delta y)^2+ \alpha_3 (\Delta y)^3 + o((\Delta y)^3)$, then we have 
    \begin{align*}
        (\Delta x)^2 &= \alpha_1^2 (\Delta y)^2 + 2\alpha_1 \alpha_2 (\Delta y)^3 + o((\Delta y)^3) \\
        (\Delta x)^3 &= \alpha_1^3 (\Delta y)^3 + o((\Delta y)^3).
    \end{align*}
    Substitute the above equations into Eqn. (\ref{cubic}) and equate the coefficients in front of $\Delta y$, $(\Delta y)^2$, $(\Delta y)^3$ in the LHS and RHS, respectively. We get three equations. Then we solve for $\alpha_1$, $\alpha_2$ and $\alpha_3$ and get the result.
\end{proof}

\begin{proposition}
\label{prop_reduce_1}
    Consider dynamics (\ref{reduce_3_1})-(\ref{reduce_3_3}) with $S(\cdot) = \mathrm{tanh}(\cdot)$. The conditions for equilibria of the perturbed dynamics of (\ref{reduce_3_1})-(\ref{reduce_3_3}) around $y^{\star}$ can be reduced to the following single condition:
    \begin{align}
        \label{pitchfork_y3}
        \frac{\mathrm{d}}{\mathrm{d}t} \big(\Delta y_3\big) = \lambda_1 \Delta y_3 + \lambda_3 (\Delta y_3)^3 +o((\Delta y_3)^3)= 0,
    \end{align}
    where $\lambda_1$ and $\lambda_3$ depend on $y^{\star}$, $u$, $N$, $n$ as follows:
    \begin{align}
    \lambda_1(y^{\star}, u, N, n) = &-(N-1) - \Big(1+\frac{n+1}{n-1}(N-2n)\Big)u \nonumber \\
        &- \frac{n(N-n-1)}{n-1}\frac{2}{c} \label{lambda1_origin}\\
    \lambda_3(y^{\star}, u, N, n) = &\frac{1}{3}\Big(1+\frac{n+1}{n-1}(N-2n)\Big)u \nonumber \\
        &+ \frac{n(N-n-1)}{n-1} \Big(\frac{2}{3c}-\frac{4b^2}{c^5}+\frac{2a}{c^4}\Big) \label{lambda3_origin}
\end{align}
with
\begin{align}
    a(y^{\star}, N, n) &= \frac{n-1}{N-2n} \frac{1}{6} \mathrm{tanh}'''(y^{\star}) \label{a}\\
    b(y^{\star}, N, n) &= \frac{n-1}{N-2n} \frac{1}{2} \mathrm{tanh}''(y^{\star}) \label{b}\\
    c(y^{\star}, u, N, n) &= \frac{n-1}{N-2n} \mathrm{tanh}'(y^{\star}) - \frac{N-n-1}{(N-2n)u} \label{c}.
\end{align}
\end{proposition}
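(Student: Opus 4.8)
The plan is to carry out the perturbation expansion sketched just before the statement and collapse it, via Lemma~\ref{lemma_cubic}, to a single scalar equation in $\Delta y_3$. First I would evaluate $\dot y_1=0$ at the perturbed point $[y^{\star}+\Delta y_1,\,-y^{\star}+\Delta y_2,\,\Delta y_3]^T$ and subtract (\ref{y_star_eqn}); the $y^{\star}$ and $\epsilon$ terms cancel, leaving
\[
  -(N-n-1)\Delta y_1 + (n-1)u\big(S(y^{\star}+\Delta y_1)-S(y^{\star})\big) + (N-2n)u\,S(\Delta y_3) = 0 .
\]
Expanding $S(y^{\star}+\Delta y_1)-S(y^{\star})$ to third order with (\ref{delta_1}), using (\ref{tanh_delta_3}) for $S(\Delta y_3)=\mathrm{tanh}(\Delta y_3)$, and dividing by $(N-2n)u$ turns this into exactly (\ref{cubic}),
\[
  a(\Delta y_1)^3 + b(\Delta y_1)^2 + c\,\Delta y_1 = -\Delta y_3 + \tfrac13(\Delta y_3)^3 + o((\Delta y_3)^3),
\]
with $a,b,c$ as in (\ref{a})--(\ref{c}). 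The equation $\dot y_2=0$ is handled identically: since $S$ is odd, the $\mathbf{y}^{\star}$-part of $\dot y_2$ is minus that of $\dot y_1$, and the parity relations $S'(-y^{\star})=S'(y^{\star})$, $S''(-y^{\star})=-S''(y^{\star})$, $S'''(-y^{\star})=S'''(y^{\star})$ give the same cubic for $\Delta y_2$ with $b$ replaced by $-b$. (This requires $N>2n$ and $c\neq0$; the case $c=0$ is a separate degeneracy of the $y^{\star}$-branch that would be treated apart.)

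Applying Lemma~\ref{lemma_cubic} to these two cubics slaves $\Delta y_1,\Delta y_2$ to $\Delta y_3$:
\begin{align*}
  \Delta y_1 &= -\tfrac1c\Delta y_3 - \tfrac{b}{c^3}(\Delta y_3)^2 + \big(\tfrac1{3c}-\tfrac{2b^2}{c^5}+\tfrac{a}{c^4}\big)(\Delta y_3)^3 + o((\Delta y_3)^3),\\
  \Delta y_2 &= -\tfrac1c\Delta y_3 + \tfrac{b}{c^3}(\Delta y_3)^2 + \big(\tfrac1{3c}-\tfrac{2b^2}{c^5}+\tfrac{a}{c^4}\big)(\Delta y_3)^3 + o((\Delta y_3)^3).
\end{align*}
Since the leading behavior is $\Delta y_{1,2}=O(\Delta y_3)$, all the $o((\Delta y_3)^3)$ bookkeeping in the subsequent Taylor expansions is consistent. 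From these, to third order in $\Delta y_3$, one gets $\Delta y_1+\Delta y_2 = -\tfrac2c\Delta y_3 + 2\big(\tfrac1{3c}-\tfrac{2b^2}{c^5}+\tfrac{a}{c^4}\big)(\Delta y_3)^3$, $(\Delta y_1)^2-(\Delta y_2)^2 = \tfrac{4b}{c^4}(\Delta y_3)^3$, and $(\Delta y_1)^3+(\Delta y_2)^3 = -\tfrac2{c^3}(\Delta y_3)^3$.

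Next I would substitute into the third equation. At $\mathbf{y}^{\star}$ the right side of (\ref{reduce_3_3}) already vanishes since $S(y^{\star})+S(-y^{\star})=0$, so the perturbed $\dot y_3$ equation reads
\[
  \frac{\mathrm{d}}{\mathrm{d}t}(\Delta y_3) = -(N-1)\Delta y_3 + (N-2n-1)u\,S(\Delta y_3) + nu\big(S(y^{\star}+\Delta y_1)-S(y^{\star})\big) + nu\big(S(-y^{\star}+\Delta y_2)-S(-y^{\star})\big).
\]
Expanding each $S$-difference to third order, inserting the three combinations above, and replacing $\tfrac16S'''(y^{\star})=\tfrac{N-2n}{n-1}a$, $\tfrac12S''(y^{\star})=\tfrac{N-2n}{n-1}b$ (from (\ref{a})--(\ref{b})) and $S'(y^{\star})=\tfrac{N-2n}{n-1}\big(c+\tfrac{N-n-1}{(N-2n)u}\big)$ (from (\ref{c})), the coefficient of $\Delta y_3$ collapses to $\lambda_1$ of (\ref{lambda1_origin}) and the coefficient of $(\Delta y_3)^3$ to $\lambda_3$ of (\ref{lambda3_origin}); setting the whole expression to zero is the reduced condition (\ref{pitchfork_y3}).

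The only nontrivial step is the cubic coefficient in this last substitution, and I expect that to be the main obstacle. The factor $nuS'(y^{\star})$ multiplying the cubic part of $\Delta y_1+\Delta y_2$ produces pieces proportional to $\tfrac{a}{c^3}$ and $\tfrac{b^2}{c^4}$ that must cancel exactly against those coming from $\tfrac12S''(y^{\star})\big((\Delta y_1)^2-(\Delta y_2)^2\big)$ and $\tfrac16S'''(y^{\star})\big((\Delta y_1)^3+(\Delta y_2)^3\big)$; what survives is only $\tfrac23\cdot\tfrac{n(N-2n)u}{n-1}$, which combines with $-\tfrac13(N-2n-1)u$ through the identity $(N-2n-1)-\tfrac{2n(N-2n)}{n-1}=-\big(1+\tfrac{n+1}{n-1}(N-2n)\big)$ to give (\ref{lambda3_origin}); the same identity turns the $\Delta y_3$-coefficient into (\ref{lambda1_origin}). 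Verifying these cancellations is careful algebra rather than a new idea, so no conceptual difficulty remains beyond it.
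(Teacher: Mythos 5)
Your proposal is correct and follows essentially the same route as the paper: perturb about $\mathbf{y}^{\star}$, reduce the first two equilibrium conditions to the cubic form of Lemma~\ref{lemma_cubic} (with $b\mapsto -b$ for the second), slave $\Delta y_1,\Delta y_2$ to $\Delta y_3$, and substitute into the third equation. The only difference is in the final bookkeeping: the paper reuses the first two equilibrium equations to eliminate the $S(y^{\star}+\Delta y_1)$ and $S(-y^{\star}+\Delta y_2)$ terms from the $\dot y_3$ equation before substituting, whereas you re-expand them and verify the cancellation of the $a/c^3$ and $b^2/c^4$ pieces directly --- both yield the stated $\lambda_1$ and $\lambda_3$.
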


\begin{proof}
First we substitute $\mathbf{y}^{\star} + \Delta \mathbf{y}$ into the RHS of  (\ref{reduce_3_1}), (\ref{reduce_3_2}) and set them equal to zero.
With  (\ref{delta_1}), (\ref{delta_2}) and (\ref{tanh_delta_3}), we get
\begin{align}
    -&(N -n-1)\Delta y_1 +(n-1)u\Big(S'(y^{\star}) \Delta y_1 \nonumber \\
    &+ \frac{1}{2}S''(y^{\star}) (\Delta y_1)^2 + \frac{1}{6}S'''(y^{\star}) (\Delta y_1)^3\Big) \nonumber \\
    &+(N-2n)u(\Delta y_3 -\frac{1}{3} (\Delta  y_3)^3) +o((\Delta y_3)^3)=0  \label{delta_y1}\\
    -&(N -n-1)\Delta y_2 +(n-1)u\Big(S'(-y^{\star}) \Delta y_2 \nonumber \\
    &+ \frac{1}{2}S''(-y^{\star}) (\Delta y_2)^2 + \frac{1}{6}S'''(-y^{\star}) (\Delta y_2)^3\Big)\nonumber \\
    &+(N-2n)u(\Delta y_3 -\frac{1}{3} (\Delta  y_3)^3) +o((\Delta y_3)^3)=0 \label{delta_y2}.
\end{align}
Eqns. (\ref{delta_y1}) and (\ref{delta_y2}) can be written as follows: 
\begin{align*}
    a (\Delta y_1)^3 \!+\! b (\Delta y_1)^2 \!+ \!c \Delta y_1 &= - \Delta y_3 \!+\! \frac{1}{3} (\Delta y_3)^3\!+ \!o((\Delta y_3)^3) \\
    a (\Delta y_2)^3 \!- \!b (\Delta y_2)^2 \!+ \!c \Delta y_2 &= - \Delta y_3 \!+ \!\frac{1}{3} (\Delta y_3)^3\! + \!o((\Delta y_3)^3) 
\end{align*}
with $a$, $b$, and $c$ given by (\ref{a}), (\ref{b}), and (\ref{c}), respectively. 


By Lemma \ref{lemma_cubic}, we have 
\begin{align}
    \Delta y_1 =& -\frac{1}{c} \Delta y_3 - \frac{b}{c^3} (\Delta y_3)^2+ \Big(\frac{1}{3c}-\frac{2b^2}{c^5}+\frac{a}{c^4}\Big) (\Delta y_3)^3 \nonumber \\
    &+ o((\Delta y_3)^3) \label{delta_y1_cubic}\\
    \Delta y_2 =& -\frac{1}{c} \Delta y_3 + \frac{b}{c^3} (\Delta y_3)^2+ \Big(\frac{1}{3c}-\frac{2b^2}{c^5}+\frac{a}{c^4}\Big) (\Delta y_3)^3 \nonumber \\
    &+ o((\Delta y_3)^3) \label{delta_y2_cubic}.
\end{align}


We substitute $\mathbf{y}^{\star} + \Delta \mathbf{y}$ into the RHS of  (\ref{reduce_3_3}) and set it equal to zero.  We leverage  (\ref{y_star_eqn}), (\ref{delta_y1}) and (\ref{delta_y2}) to get
\begin{align}
    \frac{\mathrm{d}\Delta y_3}{\mathrm{d}t} =& -(N-1)\Delta y_3 + (N-2n-1)u(\Delta y_3 -\frac{1}{3}\Delta y_3^3) \nonumber\\
    &+\frac{n(N-n-1)}{n-1} (\Delta y_1 + \Delta y_2) + o((\Delta y_3)^3)= 0.
    \label{eq:deltay3rate}
\end{align}
As we are able to express $\Delta y_1$ and $\Delta y_2$ in terms of $\Delta y_3$ from (\ref{delta_y1_cubic}) and (\ref{delta_y2_cubic}), we can substitute them into (\ref{eq:deltay3rate}). 
This gives a reduction of the conditions for equilibria of (\ref{reduce_3_1})-(\ref{reduce_3_3}) from three equations to a single equation in terms of $\Delta y_3$. We can see clearly the terms with $(\Delta y_3)^2$ cancel out, which is consistent with the $Z_2$-symmetry. We then get our main equation (\ref{pitchfork_y3}) with $\lambda_1$ and $\lambda_3$ given by (\ref{lambda1_origin}) and (\ref{lambda3_origin}), respectively.


\end{proof}

We examine  (\ref{pitchfork_y3}) from Proposition \ref{prop_reduce_1}. If $\lambda_3<0$, and $\lambda_1$ crosses zero from negative to positive, $\Delta y_3$ undergoes a supercritical pitchfork bifurcation. For $\lambda_1<0$ and $|\lambda_1|$ sufficiently small, there is a single stable solution at $\Delta y_3 = 0$, which implies $\Delta y_1 = \Delta y_2 = 0$. In this case $\mathbf{y}^{\star}$ is a stable equilibrium of (\ref{reduce_3_1})-(\ref{reduce_3_3}) and there are no other solutions nearby. For $\lambda_1>0$ and $|\lambda_1|$ sufficiently small,  $\Delta y_3 = 0$ is unstable and two stable equilibria $\Delta y_3 = \pm \sqrt{-\lambda_1/\lambda_3}$ appear. 

If $\lambda_3>0$, and $\lambda_1$ crosses zero from negative to positive, $\Delta y_3$ undergoes a subcritical pitchfork bifurcation. For $\lambda_1<0$ and $|\lambda_1|$ sufficiently small, there are two unstable equilibria $\Delta y_3 = \pm \sqrt{-\lambda_1/\lambda_3}$ and one stable equilibrium  $\Delta y_3=0$. For $\lambda_1>0$ and $|\lambda_1|$ sufficiently small, the three equilibria collapse into one unstable equilibrium $\Delta y_3=0$. 

The following proposition gives the condition for existence of the transition from supercritical to subcritical pitchfork. 
\begin{proposition}
    \label{prop_tran}
    The transition from a  supercritical pitchfork bifurcation to a  subcritical pitchfork bifurcation of dynamics (\ref{reduce_3_1})-(\ref{reduce_3_3}) with $S(\cdot) = \mathrm{tanh}(\cdot)$ occurs when $\lambda_3$ crosses zero from negative to positive. 
    The condition for the transition is 
    \begin{align}
        \lambda_3(y^{\star},N,n) = 0,
        \label{transition}
    \end{align}
    where
    \begin{align}
        \label{lambda3}
        \lambda_3&(y^{\star},N,n) = -\frac{1}{3}(N-1) + \frac{n(N-n-1)}{n-1} \nonumber \\
        &\times \frac{2a(y^{\star},N,n)c(y^{\star},u(y^{\star},N,n),N,n)-4b^2(y^{\star},N,n)}{c^5(y^{\star},u(y^{\star},N,n),N,n)},
    \end{align}
    and
    \begin{align}
    \label{u_y_star}
    u(y^{\star}, N,n) = \frac{-c_1 + \sqrt{c_1^2-4c_2c_0}}{2c_2} = \frac{-2c_0}{\sqrt{c_1^2-4c_2c_0}+c_1}.
\end{align}
Here, $a$, $b$, $c$ are  given by  (\ref{a}), (\ref{b}), (\ref{c}) and 
    \begin{align}
    c_2(y^{\star},N,n) =& \Big(n+1+\frac{n-1}{N-2n} \Big) \mathrm{tanh}'(y^{\star}) \label{c2} \\
    c_1(y^{\star},N,n) =& \frac{(N-2n-1)(N-n-1)}{(N-2n)}  \nonumber\\
    &+ \frac{(N-1)(n-1)}{N-2n} \mathrm{tanh}'(y^{\star}) \label{c1} \\
    c_0(y^{\star},N,n) =& - \frac{(N-n-1)(N-1)}{N-2n}
    \label{c0}.
\end{align}
The value of $y^{\star}$ at the transition is the solution of (\ref{transition}). The value of $u$ at the transition is a function of $y^{\star}$, $N$, and $n$  (\ref{u_y_star}). The value of $\epsilon$ at the transition is also a function of $y^{\star}$, $N$, and $n$:
\begin{align}
    \label{eps_y_star}
    \epsilon(y^{\star},N,n) = \frac{1}{2}y^{\star}-\frac{(n-1)}{2(N-n-1)}u(y^{\star}, N,n)\mathrm{tanh}(y^{\star}).
\end{align}
\end{proposition}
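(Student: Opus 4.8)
The plan is to combine the normal-form classification recorded just before the statement with the explicit coefficients of Proposition~\ref{prop_reduce_1}, and then to eliminate the parameters one at a time. From that discussion, a pitchfork bifurcation of (\ref{reduce_3_1})--(\ref{reduce_3_3}) along the branch $\mathbf{y}^{\star}$ occurs exactly when the linear coefficient vanishes, $\lambda_1(y^{\star},u,N,n)=0$, and at such a point the pitchfork is supercritical if $\lambda_3<0$ and subcritical if $\lambda_3>0$. Hence the transition is the degenerate (codimension-two) point where $\lambda_1=0$ and $\lambda_3=0$ hold at once, equivalently where $\lambda_3$ changes sign as one moves along the bifurcation set $\{\lambda_1=0\}$; I would state this as the defining condition and then carry out the elimination.

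First I would solve $\lambda_1=0$ for $u$. Substituting (\ref{c}) for $c$ into (\ref{lambda1_origin}) and clearing denominators (multiply by $(n-1)c$, then by $N-2n$) turns $\lambda_1=0$ into a quadratic in $u$; collecting powers of $u$ and dividing out the common factor $-(n-1)(N-2n)$ gives $c_2u^2+c_1u+c_0=0$ with $c_2,c_1,c_0$ as in (\ref{c2})--(\ref{c0}). Here one must keep careful track of the overall sign so that the coefficients come out exactly as stated. In the admissible range ($N>2n$, $N>n+1$, $n\ge 2$) one has $\mathrm{tanh}'(y^{\star})>0$, hence $c_2>0$ and $c_1>0$, while $c_0<0$; therefore the discriminant $c_1^2-4c_2c_0>c_1^2\ge 0$ and the product of the roots $c_0/c_2<0$, so there is a unique positive root, $u=\frac{-c_1+\sqrt{c_1^2-4c_2c_0}}{2c_2}$, which after rationalizing the numerator is the form (\ref{u_y_star}). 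This defines $u=u(y^{\star},N,n)$ on the bifurcation set; note also that $\lambda_1=0$ forces $c<0$, so the divisions by $c$ below are legitimate.

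Next I would reduce $\lambda_3$ to a function of $y^{\star}$ alone. Rewriting $\lambda_1=0$ as $\big(1+\tfrac{n+1}{n-1}(N-2n)\big)u=-(N-1)-\tfrac{2n(N-n-1)}{(n-1)c}$ and substituting into the first term of (\ref{lambda3_origin}), the $1/c$ contributions cancel and one is left with $\lambda_3=-\tfrac{1}{3}(N-1)+\tfrac{n(N-n-1)}{n-1}\cdot\tfrac{2ac-4b^2}{c^5}$, which is (\ref{lambda3}) once $u$ (and hence $c$, via (\ref{c})) is replaced by (\ref{u_y_star}). Setting this expression to zero is precisely (\ref{transition}), a single scalar equation in $y^{\star}$ for fixed $N,n$. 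Finally, since $\mathbf{y}^{\star}$ is an equilibrium, $y^{\star}$ satisfies (\ref{y_star_eqn}) with $S=\mathrm{tanh}$; solving that linear relation for $\epsilon$ gives $\epsilon=\tfrac12 y^{\star}-\tfrac{n-1}{2(N-n-1)}u\,\mathrm{tanh}(y^{\star})$, i.e.\ (\ref{eps_y_star}), with $u=u(y^{\star},N,n)$. Chaining these steps produces, for each root $y^{\star}$ of (\ref{transition}), the corresponding pair $(u,\epsilon)$ at which the pitchfork changes character.

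The calculations are routine but error-prone, and I expect the main obstacle to be the bookkeeping in the first step: correctly handling the $u$-dependence hidden inside $c$, collecting the quadratic, and matching its coefficients (up to the common scalar factor) with (\ref{c2})--(\ref{c0}), together with verifying the sign facts ($c_2,c_1>0$, $c_0<0$, $c<0$) that single out the admissible root of the quadratic and justify the divisions by $c$ in the passage to (\ref{lambda3}).
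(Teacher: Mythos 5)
Your proposal follows essentially the same route as the paper's proof: impose the three simultaneous conditions (equilibrium equation, $\lambda_1=0$, $\lambda_3=0$), rearrange $\lambda_1=0$ as the quadratic $c_2u^2+c_1u+c_0=0$ and use the sign facts $c_2,c_1>0$, $c_0<0$ to select the unique positive root (\ref{u_y_star}), substitute $\lambda_1=0$ back into (\ref{lambda3_origin}) so the $2/(3c)$ terms cancel and $\lambda_3$ reduces to (\ref{lambda3}), and finally solve (\ref{y_star_eqn}) for $\epsilon$ to obtain (\ref{eps_y_star}). The only additions are minor (e.g., your explicit check that $c<0$, which the paper defers to the proof of Theorem~\ref{main_thm}), so the argument is correct and matches the paper's.
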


\begin{proof}
From previous discussions, the transition from a  supercritical bifurcation to a  subcritical bifurcation occurs when $\lambda_3$ crosses zero from negative to positive. The bifurcation corresponds to $\lambda_1=0$. So at the transition, the following equations should be satisfied:
\begin{numcases}{}
    g(y^{\star}, u, \epsilon, N,n) = 0 \label{condition_transition_1}\\
    \lambda_1(y^{\star}, u, \epsilon, N,n) = 0 \label{condition_transition_2}\\
    \lambda_3(y^{\star}, u, \epsilon, N,n) = 0 \label{condition_transition_3}.
\end{numcases}
Here $g(\cdot)$ denotes the LHS of Eqn. (\ref{y_star_eqn}). The dependence of $g$, $\lambda_1$ and $\lambda_3$ on variables and parameters is indicated.
Thus, given $N$ and $n$, which specify the network graph structure in the family of networks with three clusters, we can solve for $y^{\star}$, $u$ and $\epsilon$ from Eqn. (\ref{condition_transition_1})-(\ref{condition_transition_3}).

Eqns. (\ref{condition_transition_2})  and (\ref{condition_transition_3}) do not depend on $\epsilon$ explicitly.  Eqn. (\ref{condition_transition_2}) can be rearranged as the following quadratic equation:
\begin{align}
    c_2(y^{\star},N,n) u^2 + c_1(y^{\star},N,n) u + c_0(y^{\star},N,n) = 0
    \label{quadratic}
\end{align}
with $c_2$, $c_1$, $c_0$ given by (\ref{c2}), (\ref{c1}), (\ref{c0}).

Since $\mathrm{tanh}'(y^{\star}) \in (0,1]$, we get that $c_2>0$, $c_1>0$ and $c_0<0$. Thus, the quadratic equation  (\ref{quadratic}) has one positive and one negative solution. We are only interested in a positive $u$, so we can write $u$ as a function of $y^{\star}$, $N$ and $n$ as in (\ref{u_y_star}).

As $y^{\star}$ increases, $\mathrm{tanh}'(y^{\star})$ decreases. Then $c_1$ and $c_2$ decrease. Thus, the denominator of the RHS of Eqn. (\ref{u_y_star}) decreases. As the numerator is a positive constant, we see that $u(y^{\star},N,n)$ is a strictly increasing function of $y^{\star}$ with $u(0,N,n) = 1$ and $u(+\infty,N,n) = (N-1)/(N-2n-1)$.
From Eqn. (\ref{condition_transition_1}), we can express $\epsilon$ as a function of $y^\star$, $N$, and $n$ as given by (\ref{eps_y_star}).

In Eqns. (\ref{lambda1_origin}) and (\ref{lambda3_origin}), the terms in the big parenthesis in front of $u$ are the same. Thus, setting $\lambda_1=0$ in Eqn. (\ref{condition_transition_2}), we can simplify the expression for $\lambda_3$ to get 
\begin{align}
    \label{lambda3_proof}
    \lambda_3 = -\frac{1}{3}(N-1) + \frac{n(N-n-1)}{n-1} \Big(\frac{2a}{c^4}-\frac{4b^2}{c^5}\Big).
\end{align}
From (\ref{lambda3_proof}), we see that $\lambda_3$ depends on $N$, $n$, $a$, $b$ and $c$. From  (\ref{a})-(\ref{c}) and the fact that we can express $u$ as a function of $y^{\star}$, we can then express $\lambda_3$ as $\lambda_3(y^{\star},N,n)$ and get  (\ref{lambda3}).
\end{proof}

Our main theorem gives the condition for the existence of a transition from supercritical pitchfork to subcritical pitchfork in dynamics (\ref{reduce_3_1})-(\ref{reduce_3_3}). The existence only depends on the network structure, i.e., $N$ and $n$.   
\begin{theorem}
    \label{main_thm}
    Given $N$ and $n$, if there exists a $y^{\star}_+>0$ such that $\lambda_3(y^{\star}_+,N,n)>0$, then there exists $y^{\star}_0 \in (0,y^{\star}_+)$ and $y^{\star}_1 \in (y^{\star}_+,+\infty)$ such that $\lambda_3(y^{\star}_0,N,n)=\lambda_3(y^{\star}_1,N,n)=0$. In particular, the existence of $y^{\star}_0$ indicates a transition from supercritical pitchfork bifurcation to subcritical pitchfork bifurcation at $\epsilon(y^{\star}_0)$ and $u(y^{\star}_0)$. This implies a cascade in the network with three clusters. 
    If there does not exist such a $y^{\star}_+$, then there is no such transition and thus no cascade. 
\end{theorem}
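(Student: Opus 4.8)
The plan is to prove the theorem by applying the intermediate value theorem to the map $y^{\star}\mapsto\lambda_3(y^{\star},N,n)$ of (\ref{lambda3}) on the interval $[0,\infty)$; throughout I work in the admissible regime $n\ge 2$, $N\ge 2n+2$ (so cluster~3 is nonempty and $u(\cdot,N,n)$ is bounded). The first task is to check that this map is a well-defined, continuous, real-valued function, since (\ref{lambda3}) divides by $c^5$. For this I would use that $u(y^{\star},N,n)$ is, by construction, the value making $\lambda_1=0$ (Eqn.~(\ref{condition_transition_2})): solving (\ref{lambda1_origin}) for $2/c$ gives
\[
\frac{n(N-n-1)}{n-1}\cdot\frac{2}{c} = -(N-1)-\Bigl(1+\tfrac{n+1}{n-1}(N-2n)\Bigr)u,
\]
whose right-hand side is strictly negative because every term and factor on it is positive. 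Hence $c<0$ all along the transition curve $u=u(y^{\star},N,n)$, so $c^5$ is negative and bounded away from zero; combined with smoothness of $\tanh$, its derivatives, and (by Proposition~\ref{prop_tran}) of $u(\cdot,N,n)$, this makes $\lambda_3(\cdot,N,n)$ continuous on $[0,\infty)$ with a finite limit at $+\infty$.

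Next I would evaluate $\lambda_3$ at the two ends of the interval. At $y^{\star}=0$ we have $\tanh'(0)=1$, $\tanh''(0)=0$, $\tanh'''(0)=-2$ and $u(0,N,n)=1$, hence $b(0)=0$, $c(0)=-1$, $a(0)=-\tfrac{n-1}{3(N-2n)}$, so (\ref{lambda3}) yields $\lambda_3(0,N,n)=-\tfrac13(N-1)-\tfrac{2n(N-n-1)}{3(N-2n)}<0$. As $y^{\star}\to+\infty$, $\tanh',\tanh'',\tanh'''\to0$ so $a,b\to0$, while $c$ tends to the nonzero negative constant $-\tfrac{(N-n-1)(N-2n-1)}{(N-2n)(N-1)}$ and $u\to\tfrac{N-1}{N-2n-1}$; therefore the fraction in (\ref{lambda3}) tends to $0$ and $\lambda_3(y^{\star},N,n)\to-\tfrac13(N-1)<0$. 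Thus $\lambda_3(\cdot,N,n)$ is strictly negative at both ends of $[0,\infty)$.

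Now suppose the hypothesis holds: $\lambda_3(y^{\star}_+,N,n)>0$ for some $y^{\star}_+>0$. Since $\lambda_3$ is continuous, negative near $0$, and tends to a negative constant, the sets $\{z\in(0,y^{\star}_+):\lambda_3(z,N,n)>0\}$ and $\{z>y^{\star}_+:\lambda_3(z,N,n)<0\}$ are both nonempty. Taking $y^{\star}_0=\inf\{z>0:\lambda_3(z,N,n)>0\}$ and $y^{\star}_1=\sup\{z>0:\lambda_3(z,N,n)>0\}$ then gives, by continuity and the endpoint signs, $y^{\star}_0\in(0,y^{\star}_+)$, $y^{\star}_1\in(y^{\star}_+,\infty)$, $\lambda_3(y^{\star}_0,N,n)=\lambda_3(y^{\star}_1,N,n)=0$, and $\lambda_3$ passing from negative to positive at $y^{\star}_0$. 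By Proposition~\ref{prop_tran}, this crossing — together with $\lambda_1=0$, which holds automatically along $u=u(y^{\star},N,n)$ — is precisely a transition from a supercritical to a subcritical pitchfork, occurring at $u(y^{\star}_0,N,n)$ from (\ref{u_y_star}) and $\epsilon(y^{\star}_0,N,n)$ from (\ref{eps_y_star}); on the side where $\lambda_3>0$ the pitchfork is subcritical, which is the cascade. Conversely, if no such $y^{\star}_+$ exists then $\lambda_3(\cdot,N,n)\le0$ on all of $(0,\infty)$, so it never crosses from negative to positive; by Proposition~\ref{prop_tran} there is then no transition and the pitchfork remains supercritical — only a contained response.

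I expect the main obstacle to be the well-definedness step of the first paragraph: guaranteeing that $c$ does not vanish along the one-parameter family $u=u(y^{\star},N,n)$, since otherwise (\ref{lambda3}) is singular and both the continuity and the IVT arguments break down. Once the sign $c<0$ is extracted from the relation $\lambda_1=0$, the remaining steps — the two endpoint evaluations and the IVT — are routine.
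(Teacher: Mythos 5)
Your proposal is correct and follows essentially the same route as the paper's proof: establish that $\lambda_3(\cdot,N,n)$ is continuous (by showing $c<0$ along the curve $u=u(y^{\star},N,n)$), evaluate the endpoints to get $\lambda_3(0,N,n)<0$ and $\lambda_3(\infty,N,n)<0$, and apply the intermediate value theorem on either side of $y^{\star}_+$. The only (minor) divergence is how $c\neq 0$ is ruled out: the paper bounds $c$ above by the explicit negative constant $-\frac{(N-n-1)(N-2n-1)}{(N-2n)(N-1)}$ via the inequality $\sqrt{c_1^2-4c_2c_0}\ge c_1$, whereas you read the sign of $c$ directly off the identity $\lambda_1=0$; both arguments are valid and yield the same conclusion.
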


\begin{proof}
    From the proof of Proposition \ref{prop_tran}, we know that $u(y^{\star})$ is a continuous function of $y^{\star}$ and $u(y^{\star})\in [1,(N-1)/(N-2n-1))$. Then from  (\ref{c}),  $c$ as a function of $y^{\star}$ does not blow up and is continuous in $y^{\star}$. Thus, from (\ref{c}), (\ref{u_y_star}), we have
    \begin{align}
        c(y^{\star}) &\!=\! \frac{n-1}{N-2n} \mathrm{tanh}'(y^{\star}) - \frac{N-n-1}{(N-2n)} 
        \frac{\sqrt{c_1^2-4c_2c_0}+c_1}{-2c_0} \nonumber \\
        &\leq \frac{n-1}{N-2n} \mathrm{tanh}'(y^{\star}) - \frac{N-n-1}{(N-2n)} 
        \frac{c_1+c_1}{-2c_0} \nonumber \\
        &= -\frac{(N-n-1)(N-2n-1)}{(N-2n)(N-1)}<0. \nonumber
    \end{align}
   Thus, from (\ref{lambda3}) it follows that $\lambda_3(y^{\star},N,n)$ does not blow up and is continuous in $y^{\star}$. Moreover, we have 
    \begin{align}
        \lambda_3(0,N,n) &= -\frac{1}{3}(N-1) - \frac{2}{3} \frac{n(N-n-1)}{(N-2n)}<0 \nonumber\\
        \lambda_3(\infty,N,n) &= -\frac{1}{3}(N-1)<0. \nonumber
    \end{align}
    If there exists a $y^{\star}_+>0$ such that $\lambda_3(y^{\star}_+,N,n)>0$, then from the continuity of $\lambda_3(y^{\star},N,n)$, we know there exists a $y^{\star}_0 \in (0,y^{\star}_+)$ and $y^{\star}_1 \in (y^{\star}_+,+\infty)$ such that $\lambda_3(y^{\star}_0,N,n)=\lambda_3(y^{\star}_1,N,n)=0$. Thus, $\lambda_3(y^{\star}_0,N,n)$ crosses zero from negative to positive, and from Proposition \ref{prop_tran},  there exists a transition from supercritical pitchfork bifurcation to subcritical pitchfork bifurcation in dynamics (\ref{reduce_3_1})-(\ref{reduce_3_3}). The value of $\epsilon$ and $u$ at which this transition happens can be calculated by $\epsilon(y^{\star}_0,N,n)$ and $u(y^{\star}_0,N,n)$ from Eqns. (\ref{eps_y_star}) and (\ref{u_y_star}), respectively.
\end{proof}

\begin{figure}[h]
    \subfloat{
        \includegraphics[width=0.22\textwidth]{./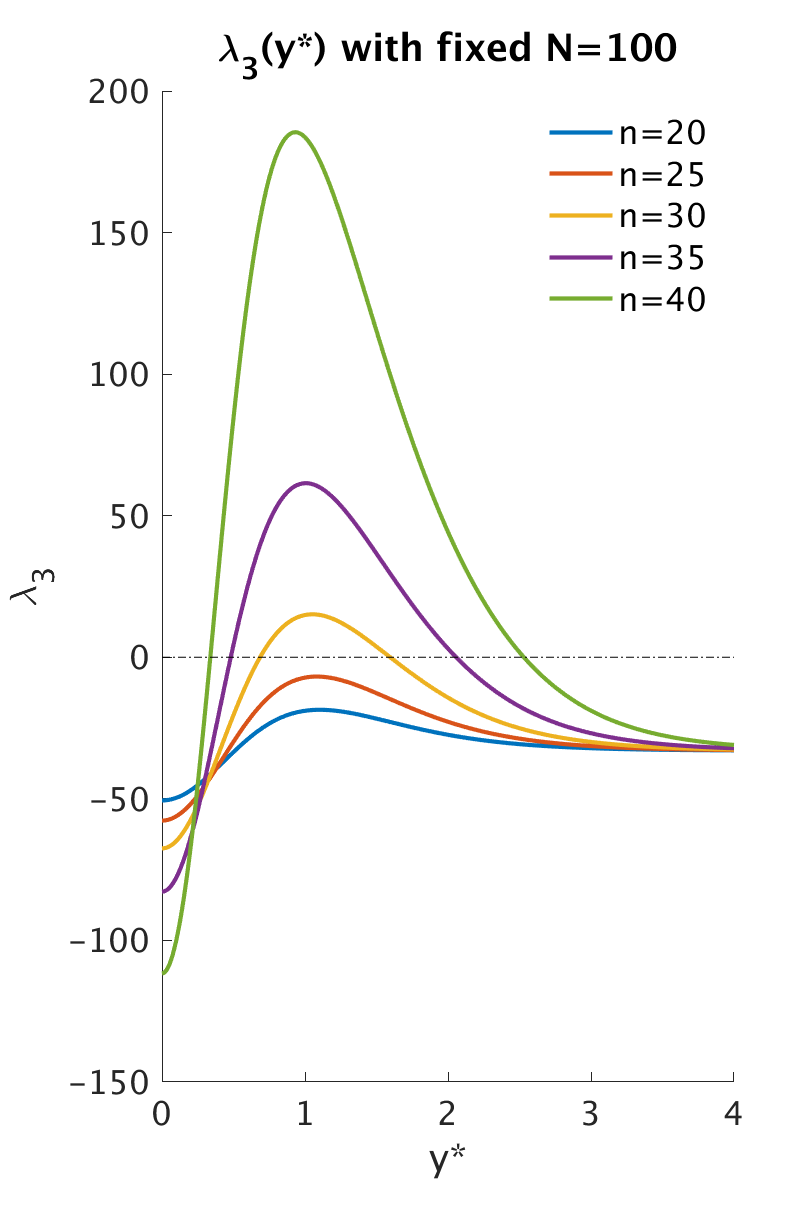}
    }
    \subfloat{
        \includegraphics[width=0.225\textwidth]{./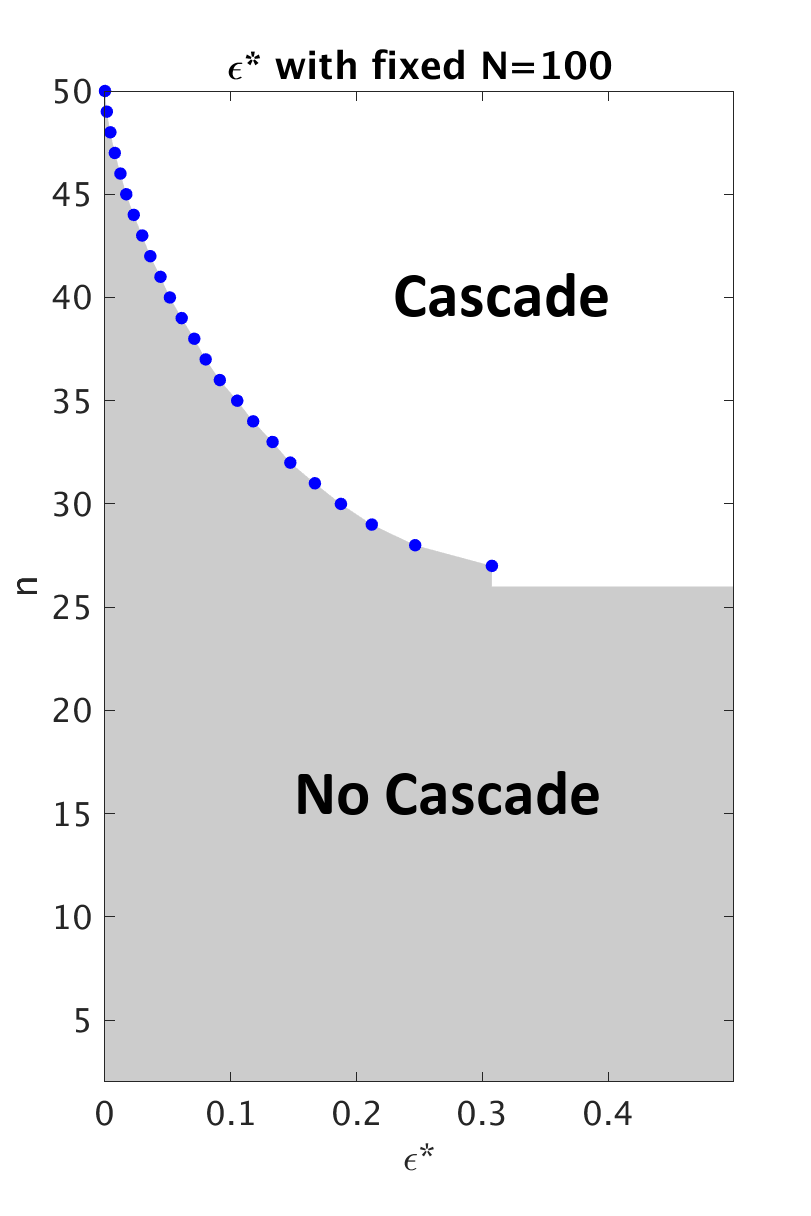}
    }
    \caption{The curves of $\lambda_3(y^{\star})$ for different values of $n$ and fixed $N$ (left). For lower $n$, $\lambda_3(y^{\star})$ remains negative. For higher $n$, $\lambda_3(y^{\star})=0$ has two solutions, and thus at the smaller solution $y^\star_0$, there is a transition from supercritical to subcritical pitchfork, and the possibility of a cascade. 
    Critical disparity $\epsilon^*$ for different values of $n$ and fixed $N$ (right). For  $n\geq 27$, $\epsilon > \epsilon^*$ leads to a cascade.
    }
    \label{N100}
\end{figure}

\begin{remark}
Fig.~\ref{N100} illustrates how the existence of a $y^{\star}_+>0$, and thus a cascade, depends on network structure parameters $N$, $n$, and $\epsilon$. For a fixed $N$, a large enough $n$, i.e., a large enough number of agents with disparity in thresholds,  is necessary for the cascade. For $n$ large enough that $y^{\star}_0$ exists, we can expect that for $\epsilon \in [0,\epsilon(y^{\star}_0,N,n)]$, the bifurcation is supercritical, since it is for $\epsilon=0$  \cite{gray2018multiagent}. As $\epsilon$ increases to greater than the critical value $\epsilon^{\star} = \epsilon(y_0^{\star},N,n)$, we expect to see the transition from no cascade to cascade.  For $N=100$, a cascade is possible if $n \geq 27$. The minimum disparity $\epsilon^{\star}$ that guarantees a cascade decreases as $n$ increases.  
\end{remark}

\section{An example}
\label{example}
We present a simulation of the CTM with the network structure shown in Fig. \ref{graph_11nodes} and $\epsilon=0.2$. The initial conditions of the 11 agents are picked randomly. Here, the average initial state is negative. 
We let $u_0 = 3$, $\kappa = 10$, and $\kappa_s=0.05$. Then  $u = 3 \mathrm{tanh}(10|\bar{x}_s|)$, where $\dot{\bar{x}}_s  = 0.05 ( \bar x- \bar{x}_s)$, $\bar x = \sum_{i=1}^{11} x_i/11$. 
Fig. \ref{sim_traj} shows how the states evolve. Agents in clusters 1, 2, and 3 are plotted in red, green, and blue, respectively. 
A perturbation $\beta=1$ is added to the dynamics (\ref{dynamics}) of an agent in the red cluster; its trajectory takes the largest value after the transient period. Except for the perturbed agent, states of all agents in each cluster quickly converge to a common value. 
So, we can interpret the results in terms of a perturbation of the reduced dynamics (\ref{reduce_3_1})-(\ref{reduce_3_3}). The solution converges to a perturbation of $\mathbf{y}^{\star} = [y^{\star},-y^{\star},0]^T$. Because of the perturbation, $\bar{x}_s$ slowly increases, which leads to a slow increase in $u$. At a certain time, $u$ crosses the bifurcation point, which leads to a cascade. 

\begin{figure}[h]
    \centering
    \includegraphics[width=0.48\textwidth]{./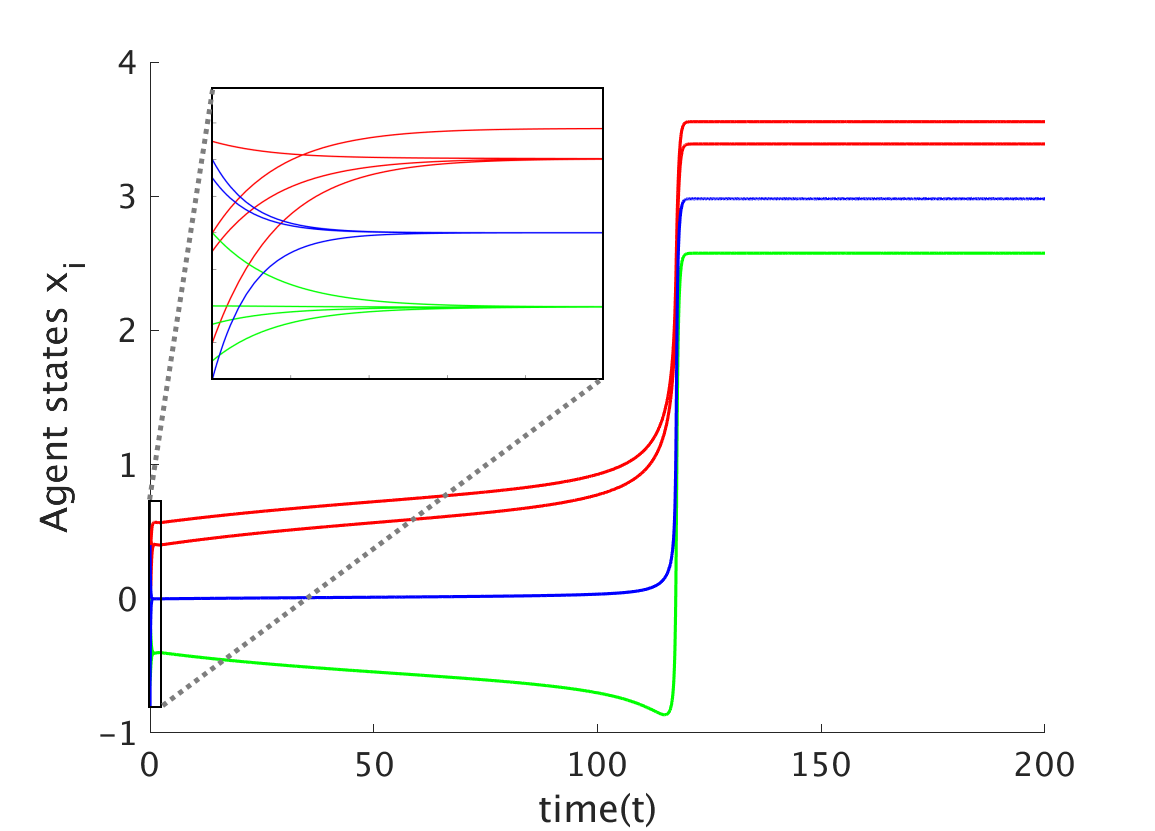}
    \caption{Agent state trajectories of the CTM in a network with three clusters, $N=11$, $n=4$. There is a cascade corresponding to the unfolded subcritical pitchfork as can be expected  since $0.2 = \epsilon>\epsilon(y_0^\star,N,n) = 0.11$.}
    \label{sim_traj}
\end{figure}

In this example, the graph structure $N=11$ and $n=4$ ensure the existence of $y^{\star}_0$ such that $\lambda_3(y^{\star}_0,N,n)=0$. Thus from Theorem \ref{main_thm}, there exists a transition from supercritical pitchfork to subcritical pitchfork in the symmetric system dynamics. Here $\epsilon(y^{\star}_0,N,n)= 0.11$. With a small $\epsilon$ (e.g., 0.1) the system exhibits a supercritical pitchfork; with a large epsilon (e.g., 0.2), the system exhibits a subcritical pitchfork. The introduction of an additive perturbation $\beta=1$ to the dynamics of a node in the high responsive group breaks the symmetry and lets the subcritical pitchfork unfold as shown in Fig. \ref{fig:perturbed} on the right. Thus, as we can see from the simulation, a cascade can be triggered even with a negative initial average state. 





\bibliographystyle{IEEEtran}
\bibliography{IEEEabrv,myref}

\end{document}